\newcounter{braid}
\newcounter{strands}
\DeclareMathAlphabet{\bsf}{OT1}{cmss}{bx}{n}
\def\cross{%
  \@ifnextchar^{\message{Got sup}\cross@sup}{\cross@sub}}
\def\cross@sup^#1_#2{\render@cross{#2}{#1}}
\def\cross@sub_#1{\@ifnextchar^{\cross@@sub{#1}}{\render@cross{#1}{1}}}
\def\cross@@sub#1^#2{\render@cross{#1}{#2}}
\def\render@cross#1#2{
  \def\strand{#1}
  \def\crossing{#2}
  \pgfmathsetmacro{\cross@y}{-\value{braid}*\braid@h}
  \pgfmathtruncatemacro{\nextstrand}{#1+1}
  \foreach \thread in {1,...,\value{strands}}
  {
    \pgfmathsetmacro{\strand@x}{\thread * \braid@w}
    \ifnum\thread=\strand
    \pgfmathsetmacro{\over@x}{\strand * \braid@w + .5*(1 - \crossing) * \braid@w}
    \pgfmathsetmacro{\under@x}{\strand * \braid@w + .5*(1 + \crossing) * \braid@w}
    \draw[braid] \pgfkeysvalueof{/tikz/braid start} +(\under@x pt,\cross@y pt) to[out=-90,in=90] +(\over@x pt,\cross@y pt -\braid@h);
    \draw[braid] \pgfkeysvalueof{/tikz/braid start} +(\over@x pt,\cross@y pt) to[out=-90,in=90] +(\under@x pt,\cross@y pt -\braid@h);
    \else
    \ifnum\thread=\nextstrand
    \else
     \draw[braid] \pgfkeysvalueof{/tikz/braid start} ++(\strand@x pt,\cross@y pt) -- ++(0,-\braid@h);
    \fi
   \fi
  }
  \stepcounter{braid}
}
\tikzset{braid/.style={double=\pgfkeysvalueof{/tikz/braid colour},double distance=1pt,line width=2pt,white}}
\newcommand{\braid}[2][]{%
  \begingroup
  \pgfkeys{/tikz/strands=2}
  \tikzset{#1}
  \pgfkeysgetvalue{/tikz/braid width}{\braid@w}
  \pgfkeysgetvalue{/tikz/braid height}{\braid@h}
  \setcounter{braid}{0}
  \let\sigma=\cross
  #2
  \endgroup
}
\newtheorem{theorem}{Theorem}
\newtheorem{proposition}[theorem]{Proposition}
\newtheorem{lemma}[theorem]{Lemma}
\def\Z{\mathbb{Z}}
\def\C{\mathbb{C}}
\def\C{\mathbb{C}}
\def\qed{\hfill$\square$\medskip}
\def\Zpk{\mathbb{Z}/p^{k}}
\def\Zpk1{\mathbb{Z}/p^{k-1}}
\newcommand{\rref}[1]{(\ref{#1})}
\newcommand{\beg}[2]{\begin{equation}\label{#1}#2\end{equation}}
\def\sl2{\widetilde{SL_{2}(\Z)}}
\title[$Q_8$ equivariant cohomology]{On the $RO(G)$-graded coefficients of $Q_8$ equivariant cohomology}
\author{Yunze Lu}
\begin{document}
\maketitle

\begin{abstract}
We calculate the $RO(G)$-graded coefficents of $H\underline{\Z}$, the Eilenberg-MacLane spectrum of constant Mackey functor for quaternion group $Q_8$. 
\end{abstract}

\vspace{10mm}

\section{Introduction}\label{sp1}

\vspace{2mm}

Let $G$ be a compact Lie group. In \cite{mm}, Lewis, May and McClure defined Eilenberg-MacLane $RO(G)$-graded cohomology with Mackey functor coefficients. Calculation of the $RO(G)$-graded cohomology groups of a point for a non-trivial finite group $G$ has been a fundamental question for equivariant stable homotopy theory. Stong \cite{stong} did the computation for $G=\Z/p$ with Burnside ring coefficient when $p$ is a prime number. Study of cases $G=\Z/{p^n}$ and $G=(\Z/p)^n$ follows in the works of various authors \cite{hhr}, \cite{hok}, \cite{hok2}, \cite{hk}, \cite{sk} \cite{clm}, \cite{mz}, etc. Recently the coefficients when $G$ is a dihedral group of order $2p$ is computed by the author and Kriz in \cite{ky}, also by Zou in \cite{zou}. Later the multiplicative strucutres is worked out by Liu \cite{liu}. In \cite{sk3} the coefficients of geometric fixed points of equivariant cohomology with constant coefficient are computed for any finite group, but a full computation of $RO(G)$-graded coefficients has not been known for any nonabelian group beyond dihedral groups $D_{2p}$. The purpose of this paper is to give a calculation of the $RO(G)$-graded cohomology coefficients for $G=Q_8$, the quaternion group, with constant Mackey functor $\underline{\Z}$ coefficient system.  

\vspace{3mm}

In equivariant stable homotopy theory, the role of abelian groups in the non-equivariant world is replaced by Mackey functors. For a finite group $G$, a $G$-Mackey functor could be described by Lewis diagram \cite{l}. Let $H, K$ denote subgroups of $G$, the constant Mackey functor $\underline{\Z}$ assigns $\Z$ to every left coset $G/H$. The restriction maps are all the identity maps, and the transfer $G/H\mapsto G/K$ is multiplication by $|H/K|$. 

\vspace{3mm}

Given a $G$-Mackey functor $\underline{M}$, one can construct Eilenberg-MacLane spectrum $H\underline{M}$ with properties analogous to its non-equivariant counterparts. Also, one can suspend an equivariant specturm not only by ordinary spheres, but also by representation spheres $S^V$, which is the one point compactification of real representation $V$. This suggests that the equivariant homology and cohomology should be graded by $RO(G)$, the real representation ring of $G$, and such grading is also necessary to obtain Spanier-Whitehead duality. We refer to \cite{lms} and \cite{mm} for more details of the framework.

\vspace{3mm}

We will fix the group $G$ to be the quaternion group $Q_8$. Let $G=Q_8$ be presented as 
$$\{i,j\,|\,i^4, i^2j^{-2}, ijij^{-1}\}.$$
The quaternion group $G$ has four one-dimensional real representations, given by scalar action of generators $i$ and $j$: 
$$i\mapsto \pm1,\quad j\mapsto \pm 1.$$
We will denote the trivial representation by $1$ and the other three representations by $\alpha,\beta,\gamma$, whose kernels are respectively $\langle i\rangle, \langle j\rangle, \langle ij\rangle.$ 

The group $G$ also has a four-dimensional irreducible real representation, where $G$ acts by left multiplication, and we will denote this representation by $\rho$. This representation is of quaternionic type.  

Hence the representations $1,\alpha,\beta,\gamma,\rho$ form an additive basis for $RO(G)$, and hence the grading is denoted as $*+k\alpha+\ell\beta+m\gamma+n\rho,$ where $*$ represents the $\Z$-grading. 

\vspace{3mm}

The main tools we use for the calculations are the $G$-equivariant cellular structures on representation spheres and the method of isotropy separation. Many of the results are essentially coming from group homology and cohomology of $Q_8$, the strategy itself is still of interest. 

\vspace{2mm} 

First we will do some reductions on the $RO(G)$-grading. The representations $\alpha,\beta,\gamma$ are symmetric up to automoprhisms of $Q_8$, hence without loss of generality we may assume that $k,\ell$ have the same sign. Furthermore, by universal coefficient theorem and Spanier-Whitehead duality, if we flip all the signs in the grading, i.e., changing $*+k\alpha+\ell\beta+m\gamma+n\rho$ to $-*-k\alpha-\ell\beta-m\gamma-n\rho$, there is an isomophism 
$$H\underline{\Z}^{Q_8}_{*+k\alpha+\ell\beta+m\gamma+n\rho}\cong H\underline{\Z}_{Q_8}^{-*-k\alpha-\ell\beta-m\gamma-n\rho}$$
so we may further restrict to $k\geq \ell \geq 0.$ 
Therefore, we reduce to the calculations of $\Z$-graded homology and cohomology of 
$$\Sigma^{k\alpha+\ell\beta+m\gamma+n\rho}H\underline{\Z}$$
for $k\geq \ell\geq 0.$

\vspace{3mm}

Under the assumption above, we still need to treat $m,n\in \Z$. Consider the following extension $$1\rightarrow \mathbb{Z}/4 \rightarrow Q_8 \rightarrow \Z/2 \rightarrow 1,$$ where the $\Z/4$ is the kernel of $\gamma.$ We look at the sign of $n$ and divide into the following two cases: 

\vspace{2mm}

For a real orthogonal representation $V$, let $S(V)$ denote the unit sphere in $V$, and let $S^V$ denote the one-point compactification of $V$. When $n\geq 0$, consider the isotropy separation sequence
\beg{is1}{S(n\rho)_+\rightarrow S^0\rightarrow S^{n\rho}}

When $n<0$, the Spanier-Whitehead dual of \rref{is1} is:
\beg{is2}{S^{n\rho}\rightarrow S^0\rightarrow \Sigma^{n\rho+1}S(-n\rho)_+}

Smash the sequences \rref{is1} and \rref{is2} with $S^{k\alpha+\ell\beta+m\gamma}.$ The sphere $S(n\rho)$ (or $S(-n\rho)$ when $n<0$) is free, hence the coefficients are essentially Borel homology and Borel cohomology. We will use an explicit $Q_8$-equivariant CW structure of $S(n\rho)_+$ to calculate these coefficients.

\vspace{2mm}

The coefficients after smashing $S^0$ with $S^{k\alpha+\ell\beta+m\gamma}$ were computed by Holler and Kriz in \cite{hk} for $H\underline{\Z/2}$. Here we need both the result for $H\underline{\Z}$ and the connecting maps. To deal with $m\in \Z$, the trick is to first take $\Z/4$-fixed point on the chain level, to work with $\Z/2=Q_8/(\Z/4)$-equivariant chain complexes. With suitable models for chain complexes, we write the chain complex of $\Sigma^{k\alpha+\ell\beta}H\Z$ as direct sum of simple chain complexes, and the connecting map only appears in one summand, and the suspensions by $S^{m\gamma}$ become formal operation on chain complexes.

\vspace{3mm}

The structure of the paper is the following: In Section \ref{sp2} we will describe an explicit $Q_8$-equivariant CW structure on unit spheres $S(n\rho)_+$ and calculate the homology of suspensions of $S(n\rho)_+$, using the complex stability of Borel theories. These will turn out to essentially be group homology and cohomology of $Q_8$. 

\vspace{2mm}

In Section \ref{sp3} we will explain the main strategy of taking $\Z/4$-fixed points, and compute the models of $\Z/2$-equivariant chain complexes. The main results require a significant amount of notation and for this reason will be stated in Section \ref{sp4} where we also carry out the computation of the $\Z/2$-equivariant chain complexes obtained as the $\Z/4$-fixed point of the equivariant chain complexes corresponding to sequence \rref{is1} and \rref{is2}. The main results are stated in Theorem \ref{t1} and Theorem \ref{t2}. In the end we complete the computations by stating the corresponding cohomological result in Theorem \ref{t3}.

\vspace{10mm}

\section{Equivariant CW Structures}\label{sp2}

\vspace{2mm}

As mentioned in the introduction, our computation is based on an explicit $Q_8$-equivariant CW structure on $S(n\rho)$ along with the corresponding chain complex structures. Since this is extremely important to the result, we carry out these computations in detail. 

\vspace{2mm}

For the 4-dimensional representaion $\rho$, by regarding $\mathbb{R}^4$ as spanned by basis $1,i,j,ij$, the actions of group elements permute the basis up to a sign. For example, generator $i$ acts by the following matrix:
$$
\left[
\begin{matrix}
   0 & -1 & 0 & 0\\
   1 & 0 & 0 & 0\\
   0 & 0 & 0 &-1\\
   0 & 0 & 1 & 0
  \end{matrix}
\right]
$$
For $n>0$, let $S(n\rho)$ be the unit sphere in the representaion $n\rho$. Observe that $S(\infty \rho)$ is a model for the universal classifying space $EQ_8$. This gives a guide for how to subdivide $S(n\rho)$ to obtain a $Q_8$-equivariant CW structure. 

\vspace{2mm}

By identifying the non-equivariant underlying space of $S(n\rho)$ as a subspace of $\mathbb{R}^{4n}$, we index the Euclidean coordinates as 
$$\{x_1,y_1,z_1,w_1,...,x_n,y_n,z_n,w_n\}.$$ 
We also use $X_r$ as an abbreviation for $(x_r,y_r,z_r,w_r).$ Let $1\leq r\leq n$, consider the folloing cells for $S(n\rho)$:

\vspace{2mm}
\leftline{\textbf{Type A.}}

Cells $a_{r,1}$ generated by 
$$\{(X_1,...,X_{r-1},x_r,0,0,0,0,...,0\}\in S(n\rho)\,|\,x_r\in [0,1]\}.$$
The cell $a_{r,1}$ has dimension $4r-4$. It is $Q_8$-free.

\vspace{2mm}
\leftline{\textbf{Type B.}}

Cells $b_{r,1},b_{r,2},b_{r,3}$ generated respectively by 
$$\{(X_1,...,X_{r-1},x_r,y_r,0,0,0,...,0\}\in S(n\rho)\,|\,x_r,y_r\in [0,1]\},$$
$$\{(X_1,...,X_{r-1},x_r,0,z_r,0,0,...,0\}\in S(n\rho)\,|\,x_r,z_r\in [0,1]\},$$
$$\{(X_1,...,X_{r-1},x_r,0,0,w_r,0,...,0\}\in S(n\rho)\,|\,x_r,w_r\in [0,1]\}.$$
The cells $b_{r,1},b_{r,2},b_{r,3}$ have dimension $4r-3$. They are $Q_8$-free.

\vspace{2mm}
\leftline{\textbf{Type C.}}

Cells $c_{r,1},c_{r,2},c_{r,3},c_{r,4}$ generated respectively by 
$$\{(X_1,...,X_{r-1},x_r,y_r,z_r,0,0,...,0\}\in S(n\rho)\,|\,x_r,y_r,z_r \in [0,1]\},$$
$$\{(X_1,...,X_{r-1},x_r,y_r,0,w_r,0,...,0\}\in S(n\rho)\,|\,x_r,y_r,w_r\in [0,1]\},$$
$$\{(X_1,...,X_{r-1},x_r,0,z_r,w_r,0,...,0\}\in S(n\rho)\,|\,x_r,z_r,w_r\in [0,1]\},$$
$$\{(X_1,...,X_{r-1},0,y_r,z_r,w_r,0,...,0\}\in S(n\rho)\,|\,y_r,z_r,w_r\in [0,1]\}.$$
The cells $c_{r,1},c_{r,2},c_{r,3},c_{r,4}$ have dimension $4r-2$. They are $Q_8$-free.

\vspace{2mm}
\leftline{\textbf{Type D.}}

Cells $d_{r,1},d_{r,2}$ generated by 
$$\{(X_1,...,X_{r-1},x_r,y_r,z_r,w_r,0,...,0\}\in S(n\rho)\,|\,x_r,y_r,z_r,w_r\in [0,1]\},$$
\begin{align*}
\{(X_1,...,X_{r-1},x_r,y_r,z_r,w_r,0,...,0\}\in S(n\rho)\,\\ |\,x_r,y_r,z_r\in [0,1],w_r\in [-1,0]\}.
\end{align*}
$$$$
The cells $d_{r,1},d_{r,2}$ have dimension $4r-1$. They are $Q_8$-free.

\vspace{2mm}

The spaces given by the generators are homeomorphic to (closed) disks. The attaching maps from the boundary of $n$-cells are equivariant and are mapping to lower dimensional cells. Finally the open cells form a partition of $S(n\rho)$, so it is a regular $G$-CW complex. The topology is quotient topology and agrees with the induced topology on $S(n\rho)$. In other words, these cells give a $Q_8$-equivariant CW decomposition for each $S(n\rho).$ All the cells are free since quaternion numbers form a division algebra.

\vspace{2mm}

As being unit sphere in the representations, the CW structure is regular, i.e., the attaching maps are embeddings, hence the incidence coefficients are either $+1$ or $-1$. By identifying $\mathbb{R}^{4n}$ with $\mathbb{C}^{2n}$, we use the following rule to determine induced orientation of the boundary: the induced orientation followed by the outward normal direction should make up together the standard orientation of $\mathbb{C}^{2n}$. With this rule we derive the following differentials: 

\begin{lemma}\label{dnr} With respect to the CW-structure and orientations described above, the $Q_8$-equivariant cell chain complex of $S(n\rho)$ in the sense of Bredon \cite{bredon} has differentials

\vspace{1mm} 

$da_{1,0}=0$

\vspace{1mm}
\leftline{\rm{For} $1<r\leq n$,}

$da_{r>1,0}=(1+i+j+ij+(-1)+(-i)+(-j)+(-ij))(d_{r,1}-d_{r,2})$

\vspace{2mm}

\leftline{\rm{In the rest, for} $1\leq r\leq n$,}

$db_{r,1}=ia_{r,1}-a_{r,1}$

$db_{r,2}=ja_{r,1}-a_{r,1}$

$db_{r,3}=(ij)a_{r,1}-a_{r,1}$

$dc_{r,1}=b_{r,1}-b_{r,2}-jb_{r,3}$

$dc_{r,2}=b_{r,1}-b_{r,3}+ib_{r,2}$

$dc_{r,3}=b_{r,2}-b_{r,3}-(ij)b_{r,1}$

$dc_{r,4}=-(j)b_{r,3}-(ij)b_{r,1}-(i)b_{r,2}$

$dd_{r,1}=c_{r,1}-c_{r,2}+c_{r,3}-c_{r,4}$

$dd_{r,2} =c_{r,1}-jc_{r,2}+(-ij)c_{r,3}-(-i)c_{r,4}.$

\end{lemma}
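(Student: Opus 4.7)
The plan is to verify each stated differential by direct examination of the cell parametrizations, exploiting freeness of the $Q_8$-action to reduce to a single orbit representative and then computing signs from the orientation convention. Every cell in the list is the intersection of $S(n\rho)$ with a convex region cut out by sign inequalities on a subset of the coordinates $(x_r, y_r, z_r, w_r)$, together with the vanishing of all later coordinates. A boundary face of such a cell is obtained by replacing one sign inequality by the corresponding equality; the resulting region is either one of the listed cells directly, or, because $Q_8$ acts on the coordinates of $\rho$ by signed permutations, there is a unique $g \in Q_8$ whose action puts it into the normal form of a listed cell. Reading off that $g$ gives the group element appearing in the differential.

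For Type B, C, and Type D cells I would walk through each generator face-by-face. For instance, the face $y_r = 0$ of $b_{r,1}$ is $a_{r,1}$ itself, while the face $x_r = 0$ of $b_{r,1}$ lies in the orbit of $a_{r,1}$ via the group element $i$, since the matrix of $i$ acting on $\rho$ rotates the $x_r$-axis to the $y_r$-axis; the orientation convention then pins down the signs, giving $db_{r,1} = i a_{r,1} - a_{r,1}$. The analogues for $b_{r,2}, b_{r,3}$ use $j$ and $ij$. For each $c_{r,k}$ the three coordinate-zero faces produce either a listed $b$-cell or a $Q_8$-translate of one, accounting for the specific coefficients shown. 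For $d_{r,2}$, whose defining region has $w_r \in [-1, 0]$ rather than $[0,1]$, the face $w_r = 0$ matches the corresponding face of $d_{r,1}$, while the remaining three faces introduce translates by elements that flip the sign of $w_r$ together with the appropriate coordinate permutation.

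For Type A with $r > 1$, the cell $a_{r,1}$ has dimension $4(r-1)$ and its attaching sphere is the full subcomplex $S((r-1)\rho)$, because setting $x_r = 0$ lets $(X_1, \ldots, X_{r-1})$ range over the entire previous sphere. Hence $da_{r,1}$ equals the fundamental class of $S((r-1)\rho)$, which as a $Q_8$-equivariant cellular chain is a sum of top-dimensional $d$-cells over the full $Q_8$-orbit, with the two orbit representatives contributing with opposite signs because they are related by a reflection through $w_{r-1} = 0$. Summing this over $Q_8$ yields the displayed formula.

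The principal obstacle is the consistent bookkeeping of orientation signs: each sign must be referred back to the standard orientation on $\mathbb{C}^{2n}$ (with complex pairs $(x_r, y_r)$ and $(z_r, w_r)$) via the rule that the boundary orientation followed by the outward normal reproduces the ambient one, and its interaction with the $Q_8$-translate putting each face into normal form must be tracked carefully. Since all orbits are free and the number of generator classes is finite, this is routine but tedious; a useful consistency check is that the resulting differential must square to zero and yield the expected cellular homology of $S^{4n-1}$, which together determine the signs up to a finite ambiguity resolvable on a single orbit representative.
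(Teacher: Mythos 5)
Your proposal follows essentially the same approach as the paper: identify each boundary face of a generating cell by setting one of the defining inequalities to an equality, use the signed-permutation action of $Q_8$ on the coordinates to put the face into the normal form of a listed cell (reading off the group element), and then pin down the sign by comparing the induced boundary orientation (juxtaposed with the outward normal) against the standard orientation of $\mathbb{C}^{2n}$. The paper illustrates this method with the cell $c_{r,1}$ and the face $jb_{r,3}$, whereas you detail $b_{r,1}$ and the Type~A attaching sphere, but the underlying method and orientation bookkeeping are the same.
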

\begin{proof} 
This is done by the same method as Lemma 3 in \cite{ky}. As an example we look at the cell $c_{r,1}$ for some $r\geq 1$. The generating cell of $c_{r,1}$ is given by 
\beg{cr1}{\{(X_1,...,X_{r-1},x_r,y_r,z_r,0,0,...,0\}\in S(n\rho)\,|\,x_r,y_r,z_r \in [0,1]\}}
The cell is of dimension $4r-2$ and we consider cells of dimension $4r-3$ to which $c_{r,1}$ attaches, and they are $b_{r,1},b_{r,2}$ and $b_{r,3}$. It remains to determine the incidence numbers between $c_{r,1}$ and these cells. Consider the equivariant cell $b_{r,3}$ generated by  
$$\{(X_1,...,X_{r-1},x_r,0,0,w_r,0,...,0\}\in S(n\rho)\,|\,x_r,w_r\in [0,1]\}.$$
Since $x_r,y_r,z_r \in [0,1]$ in the generator of $c_{r,1}$, it is only attached to the $j$-orbit of $b_{r,3}$. This orbit is given by 
$$\{(jX_1,...,jX_{r-1},0,w_r,x_r,0,0,...,0\}\in S(n\rho)\,|\,x_r,w_r\in [0,1]\}$$
since $j(x_r+w_r(ij))=(w_ri+x_rj)$. 

\vspace{1mm}

We could use the basis 
\beg{o1}{(e_1,ie_1,...,e_{2r-2},ie_{2r-2},e_{2r-1},ie_{2r-1})}
to determine the orientation of the generator of $c_{r,1}$ (identify it by an orientation preserving homeomorphism with the unit disk in $\C^{2r-1}$ since $z_r$ is determined by $X_1,...,X_{r-1},x_r,y_r$).

Similarly the induced orientation of $jb_{r,3}$ as subspace is
\beg{o2}{(e_2,-ie_2,-e_1,ie_1,...,e_{2r-2},-ie_{2r-2},-e_{2r-3},ie_{2r-3},-ie_{2r-1})}
On a point of $jb_{r,3}$ that $c_{r,1}$ attaches, by the rules set above, the induced oritentation is given by 
\beg{o3}{(e_1,ie_1,...,e_{2r-2},ie_{2r-2},ie_{2r-1})}
since juxtaposing with outward normal direction $-e_{2r-1}$ (since in (\ref{cr1}) we have $x_r\geq 0$) gives the same orientation as in (\ref{cr1}). Comparing orientations (\ref{o2}) and (\ref{o3}) gives that the incidence number between $c_{r,1}$ and $jb_{r,3}$ is $-1$, i.e., 
$$dc_{r,1}=...-jb_{r,3}+...$$
Other incidence numbers are computed by the same method.
\end{proof}

With this, we can calculate the $Q_8$-equivariant homology and cohomology of $S(n\rho)$ with coefficient $\underline{\Z}$. As an example we explicitly compute the homology here. Order the cells in the order as listed above ($b_{r,2}$ comes after $b_{r,1}$ for example), by Lemma \ref{dnr}, the chain complex is 
$$\Z^2\xrightarrow{d_3}\Z\rightarrow ... \rightarrow \mathbb{Z}^3\xrightarrow{d_1} \mathbb{Z} \xrightarrow{d_4}\mathbb{Z}^2 \xrightarrow{d_3} \mathbb{Z}^4 \xrightarrow{d_2} \mathbb{Z}^3\xrightarrow{d_1} \mathbb{Z}\rightarrow 0.$$
where the differentials are given by the following matrices:

$$d_1=0, d_2=\left[
 \begin{matrix}
   1 & 1 & -1 & -1\\
   -1 & 1 & 1 & -1\\
   -1 & -1 & -1 & -1
  \end{matrix}
  \right], d_3=\left[
 \begin{matrix}
  1 & 1\\
 -1 & -1\\
1 & 1\\
-1 & -1
  \end{matrix}
  \right], d_4=\left[
\begin{matrix}
8\\
-8
\end{matrix}
\right].$$
Taking homology we get $4$-periodic result
$$H^{Q_8}_{q}(S(n\rho),\underline{\mathbb{Z}}) =\left\{\begin{array}{ll}
\mathbb{Z} & \text{$q=0$}\\
\mathbb{Z}/2\oplus \mathbb{Z}/2 & \text{$0<q<4n-1$, $q\equiv 1$ mod 4}\\
\mathbb{Z}/8 & \text{$0<q<4n-1$, $q\equiv 3$ mod 4}\\
0 & \text{$q>0$ even}
\end{array}\right.$$
When $n\rightarrow \infty$, we recover the group homology of $Q_8$ with coefficient in $\Z$. Note that $Q_8$ is a periodic group of period 4, it acts freely on the $3$-sphere $S^\rho$, and every abelian subgroup of $Q_8$ is cyclic \cite{ce}.

\vspace{2mm}

In fact, we can compute all homology and cohomology of $$\Sigma^{k\alpha+\ell \beta+m\gamma}S(n\rho)_+$$ 
for any suspensions by $\alpha,\beta,\gamma.$ This is because Borel theories are complex stable \cite{sk2}, i.e., if $V$ is a complex representation, then
$$H^*_{Q_8}(\Sigma^{\star+V}S(\infty\rho)_+,\underline{\mathbb{Z}})\cong H^{\star+\text{dim }V}_{Q_8}(\Sigma^\star S(\infty\rho)_+,\underline{\mathbb{Z}}).$$
Since the representations $2\alpha,2\beta, 2\gamma$ are complex, we may reduce $k,\ell,m\in \Z$ to $k,\ell,m=0,1.$ By symmetry of $Q_8$ again, it suffices to calculate suspension by $S^\alpha, S^{\alpha+\beta}, S^{\alpha+\beta+\gamma}$. 

\vspace{2mm}

There are spectral sequences 
$$H_{p}(G,\widetilde{H}_q(X))\Rightarrow \widetilde{H}^{Q_8}_{p+q}(EG_+\wedge X)$$
$$H^{p}(G,\widetilde{H}^q(X))\Rightarrow \widetilde{H}_{Q_8}^{p+q}(EG_+\wedge X)$$
When $X$ is a sphere, the non-equivariant (co)homology of $X$ vanishes in all but one degree, and the spectral sequences collapse. The action of $Q_8$ on the top (co)homology class $\Z$ is either a trivial action which arises when  $X=S^{\alpha+\beta+\gamma}$, or a twisted action which arises when $X=S^{\alpha}$ or $X=S^{\alpha+\beta},$ depending on whether the action preserves orientations. Therefore, the computations reduces to calculate the group (co)homology of $Q_8$ with trivial/twisted coefficients.

\vspace{2mm}

To work this out, one may either compute the group (co)homology, using the universal space $S(\infty \rho)_+$, then cut off at the top differential, or do actual suspension and calculate the chains directly. Using the first method for homology, we get the following chain complex after tensoring with twisted coefficients $\Z$ (which will henceforth be denoted as $\Z^-$) over $Q_8$: 
$$\Z^2\xrightarrow{d'_3}... \rightarrow \mathbb{Z}^3\xrightarrow{d'_1} \mathbb{Z} \xrightarrow{d'_4}\mathbb{Z}^2 \xrightarrow{d'_3} \mathbb{Z}^4 \xrightarrow{d'_2} \mathbb{Z}^3\xrightarrow{d'_1} \mathbb{Z}\rightarrow 0.$$
Suppose that the kernel of the twisted coefficients $\mathbb{Z}^-$ here be $\langle i \rangle$, then the differentials are:
$$d'_1=\left[
\begin{matrix}
0 & -2 & -2
\end{matrix}
\right], d'_2=\left[
 \begin{matrix}
   1 & 1 & 1 & 1\\
   -1 & 1 & 1 & -1\\
   1 & -1 & -1 & 1
  \end{matrix}
  \right], d'_3=\left[
 \begin{matrix}
  1 & 1\\
 -1 & 1\\
1 & -1\\
-1 & -1
  \end{matrix}
  \right], d'_4=\left[
\begin{matrix}
0\\
0
\end{matrix}
\right].$$

Taking homology we get 
$$\widetilde{H}^{Q_8}_{q}(\Sigma^\alpha S(n\rho)_+,\underline{\mathbb{Z}}) =\left\{\begin{array}{ll}
\mathbb{Z}/2 &\text{$0 \leq q \leq 4n-1$, $q\equiv 1,2,3$ mod 4}\\
0 & \text{$0\leq q \leq 4n-1$, $q\equiv 0$ mod 4}
\end{array}\right.$$

Similarly, here are the other results:
$$\widetilde{H}^{Q_8}_{q}(\Sigma^{\alpha+\beta}S(n\rho)_+,\underline{\mathbb{Z}}) =\left\{\begin{array}{ll}
\mathbb{Z}/2 &\text{$0 \leq q \leq 4n+1$, $q\equiv 0,2,3$ mod 4}\\
0 & \text{$0\leq q \leq 4n+1$, $q\equiv 1$ mod 4}
\end{array}\right.$$

$$\widetilde{H}^{Q_8}_{q}(\Sigma^{\alpha+\beta+\gamma}S(n\rho)_+,\underline{\mathbb{Z}}) =\left\{\begin{array}{ll}
\mathbb{Z} & \text{$q=3$}\\
\mathbb{Z}/2\oplus \mathbb{Z}/2 & \text{$0 < q \leq 4n+2$, $q\equiv 1$ mod 4}\\
\mathbb{Z}/8 & \text{$2<q<4n+2$, $q\equiv 3$ mod 4}\\
0 & \text{$q$ even}
\end{array}\right.$$
\vspace{10mm}

\section{The Main Constructions}\label{sp3}

\vspace{2mm}

We keep working under the assumption that $k\geq \ell\geq 0$. The main strategy to deal with suspensions by $S^{m\gamma}$ is to take $\Z/4$-fixed points of the chain complexes so that the $m\gamma$-suspension becomes an operation on the level of $\Z/2$-equivariant chain  complexes. In this section we first do the calculations for the free part $S(n\rho)$ (or $S(-n\rho)$ when $n<0$).

\vspace{2mm}

Let $n\geq 0$. We take the $\mathbb{Z}/4=\langle ij \rangle$-fixed points
$$C_*(S(n\rho)_+,\underline{\Z})^{\Z/4},$$
$$C_*(S(n\rho)_+,\underline{\Z^-})^{\Z/4}.$$
Let the generator of the quotient $C':=Q_8/\langle ij\rangle$ be $\sigma.$ With Lemma \ref{dnr}, it is again routine to calculate the following differentials in the $\mathbb{Z}$ case for $1\leq r\leq n$:

\vspace{2mm}

$db_{r,1}=0$

$db_{r,2}=(\sigma-1)a_{r,1}$

$db_{r,3}=(\sigma-1)a_{r,0}$

$dc_{r,1}=b_{r,1}-b_{r,2}-\sigma b_{r,3}$

$dc_{r,2}=b_{r,1}+b_{r,2}-b_{r,3}$

$dc_{r,3}=-\sigma b_{r,1}+b_{r,2}-b_{r,3}$

$dc_{r,4}=-\sigma b_{r,1}-b_{r,2}-\sigma b_{r,3}$

$dd_{r,1}=c_{r,1}-c_{r,2}+c_{r,3}-c_{r,4}$

$dd_{r,2}=c_{r,1}-\sigma c_{r,2} + \sigma c_{r,3} - c_{r,4}$

$da_{r+1,1}=(4+4\sigma) (d_{r,1}-d_{r,2})$

\vspace{2mm}

Define $C(r)$ for $1\leq r \leq n-1$ to be the following chain complex:
$$0\rightarrow \Z[C'] \xrightarrow{1+\sigma} \Z[C'] \xrightarrow{1-\sigma} \Z[C'] \xrightarrow{4+4\sigma} \Z[C'] \xrightarrow{1-\sigma} \Z[C'] \xrightarrow{1+\sigma} \Z[C']\rightarrow 0,$$
where the free $\Z[C']$-modules are generated by 
$$c_{r+1,2},b_{r+1,2},-a_{r,1},d_{r,2},c_{r,2}-c_{r,3},b_{r,2}-b_{r,3}.$$
Define
$$C(n): \quad 0\rightarrow [d_{n,2}] \xrightarrow{1-\sigma} [c_{n,2}-c_{n,3}]\ \xrightarrow{1+\sigma} [b_{n,2}-b_{n,3}]\rightarrow 0,$$
and 
$$C(0): \quad 0\rightarrow [c_{1,2}] \xrightarrow{1+\sigma} [b_{1,2}]\xrightarrow{1-\sigma} [-a_{1,1}] \rightarrow 0).$$
(Note that all the bottom $\mathbb{Z}[C']$'s are at degree 0). 

\vspace{2mm}

These complexes are connected by chain maps $f_r$ (in fact, they are also differentials in the chain complex) for $1\leq r \leq n-1$:
$$f_r: C(r)[-5] \rightarrow C(r+1), \quad [c_{r+1,2}] \xrightarrow{1-\sigma} [b_{r+1,2}-b_{r+1,3}],$$
and the chain map 
$$f_0: C(0)[-2] \rightarrow C(1), \quad [c_{1,2}]\xrightarrow{1-\sigma} [b_{1,2}-b_{1,3}].$$

\vspace{2mm}

If we quotient out, for each $1\leq r\leq n$, two acyclic complexes
$$0\rightarrow c_{r,1}\rightarrow b_{r,1}-b_{r,2}-\sigma b_{r,3}\rightarrow 0$$
$$0\rightarrow d_{r,1}\rightarrow c_{r,1}-c_{r,2}+c_{r,3}-c_{r,4} \rightarrow 0$$
in $C_*(S(n\rho)_+,\underline{\Z})^{\Z/4}$ and then take the cokernel, the result could be written a totalization of the following double complex:
$$\Theta^+_{n,0}:=\text{Tot}(C(0)\xrightarrow{f_0[2]} C(1)[2] \xrightarrow{f_1[7]} C(2)[7] \rightarrow ...\rightarrow C(n)[5n-3]).$$
With $\underline{\Z^-}$ coefficient, similar calculations define 
$$\Theta^-_{n,0}:=\text{Tot}(C^-(0)\xrightarrow{f^-_0[2]} C^-(1)[2] \xrightarrow{f^-_1[7]} C^-(2)[7] \rightarrow ...\rightarrow C^-(n)[5n-3]),$$
where the $C^-$ chains and $f^-$ chain maps are differed from their counterparts by chaging signs of $\sigma$ in all differentials (note that the chain maps $f_r$'s were noted as differentials). As an example, 
$$C^-(0): 0\rightarrow [c_{1,2}] \xrightarrow{1-\sigma} [b_{1,2}]\xrightarrow{1+\sigma} [-a_{1,1}] \rightarrow 0.$$
For example, when $n=2$, the chain complex $\Theta^+_{2,0}$ could be visualized as

$$
\xymatrix{
\circ \ar[r]^{1-\sigma} &\circ \ar[r]^{1+\sigma} &\circ &  &  & & &\\
 &\circ \ar[r]^{1+\sigma}\ar[ur]^{1-\sigma} &\circ\ar[r]^{1-\sigma} &\circ \ar[r]^{4+4\sigma} &\circ \ar[r]^{1-\sigma} &\circ \ar[r]^{1+\sigma} &\circ &\\
& & & & &\circ \ar[r]^{1+\sigma}\ar[ur]^{1-\sigma} &\circ \ar[r]^{1-\sigma} &\circ
}$$
where each circle represents a $\Z[C'].$

\vspace{2mm}

To have full computations of the $RO(G)$-graded coefficients, we will need to smash the sequence \rref{is1} 
$$S(n\rho)_+\rightarrow S^0\rightarrow S^{n\rho}$$
with $\Sigma^{k\alpha+\ell\beta+m\gamma}H\underline{\Z}$ for $k,\ell,m\in \Z$. Recall that we have assumed $k\geq \ell \geq 0$, the map 
$$S(n\rho)_+\rightarrow S^0$$ 
induces map on the level of $\mathbb{Z}/4$-fixed points of chains. When $m=0$, it is
\beg{cnm}{C_*(\Sigma^{k\alpha+\ell\beta}S(n\rho)_+,\underline{\mathbb{Z}})^{\Z/4}\rightarrow C_*(S^{k\alpha+\ell\beta};\underline{\Z})^{\Z/2},}
since the center acts trivially on the target. 

\vspace{2mm}

Since we are working with $C'\cong \mathbb{Z}/2$-equivariant chain complexes, let $\gamma'$ be the sign representation of $C'$, it is useful to also introduce the following notations for $m\geq 0$:
$$A_s^+=C_*(S^{s\gamma'}),$$
$$A_s^-=C_* (S^{s\gamma'}/S^{\gamma'})[-1].$$ These chain complexes are given explictly by	
$$A_s^+: \mathbb{Z}[C']\xrightarrow{1+(-1)^{s-1}\sigma} \mathbb{Z}[C'] \rightarrow ...\rightarrow \mathbb{Z}[C'] \xrightarrow{1-\sigma} \mathbb{Z}[C'] \xrightarrow{\text{aug}}\mathbb{Z}$$
$$A_s^-: \mathbb{Z}[C']\xrightarrow{1+(-1)^{s}\sigma} \mathbb{Z}[C'] \rightarrow ...\rightarrow \mathbb{Z}[C'] \xrightarrow{1+\sigma} \mathbb{Z}[C'] \xrightarrow{\text{aug}}\mathbb{Z}^-.$$
We have the following decomposition:

\begin{lemma}If $k\geq \ell\geq 0$, then 
$$C_*(S^{k\alpha+\ell\beta};\underline{\Z})^{\Z/2}=\bigoplus_{s=\ell}^{k}A_\ell^{(-1)^s}[s] \oplus \bigoplus_{s=0}^{\ell-1}(A_s^{(-1)^s}[s]\oplus A_s^{(-1)^{s+1}}[s+1]).$$
\end{lemma}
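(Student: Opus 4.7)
I plan to verify the decomposition by combining an explicit CW model with an inductive argument on $k+\ell$. Fix a $Q_8$-equivariant cell structure on $S^{k\alpha+\ell\beta}$ inherited from the smash $S^{k\alpha}\wedge S^{\ell\beta}$: each factor carries two $Q_8$-fixed $0$-cells together with a $Q_8/\langle i\rangle$- or $Q_8/\langle j\rangle$-orbit of cells in each positive dimension, yielding product cells indexed by pairs $(r,t)$ with $0\le r\le k$, $0\le t\le\ell$, of dimension $r+t$ and orbit types among $Q_8/Q_8$, $Q_8/\langle i\rangle$, $Q_8/\langle j\rangle$, $Q_8/\{\pm 1\}$ depending on which of $r,t$ are positive. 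Passing to $\Z/4=\langle ij\rangle$-fixed points, each cell of type $Q_8/\{\pm 1\}$ contributes a free $\Z[C']$-module (because $\langle ij\rangle$ splits $Q_8/\{\pm 1\}$ into two size-$2$ orbits swapped by $\sigma$), while the remaining orbit types each contribute a rank-one summand whose $C'$-action is trivial or sign, determined by the orientation character of the $Q_8$-action on the cell.

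With this model in hand, the decomposition amounts to partitioning the cells into the summands. For $\ell\le s\le k$, the $A_\ell^{(-1)^s}[s]$-summand is spanned by the diagonal strip $\{(s-j,j): 0\le j\le\ell\}$: the bottom cell $(s,0)$ provides the augmentation target $\Z^{(-1)^s}$, and the higher cells provide the free $\Z[C']$-pieces, with alternating $1\pm\sigma$ differentials coming directly from the product-CW boundary formulas. For $0\le s<\ell$, the paired summand $A_s^{(-1)^s}[s]\oplus A_s^{(-1)^{s+1}}[s+1]$ corresponds to the two symmetric diagonals adjacent to $r=t$ near $s$. The main obstacle is the sign and combinatorial bookkeeping needed to verify that each strip is closed under the differential and realizes the prescribed $A_s^\pm$ chain complex, while no differentials cross between strips.

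The cleanest way to organize the verification is inductively, with base case $k=\ell=0$ (where the decomposition is simply $\Z$ in degree $0$). For $k>\ell$, the cofiber sequence
$$S^{(k-1)\alpha+\ell\beta}\to S^{k\alpha+\ell\beta}\to \Sigma(S(\alpha)_+\wedge S^{(k-1)\alpha+\ell\beta})$$
reduces to $(k-1,\ell)$, with the extra free $Q_8/\langle i\rangle$-cellular suspension contributing exactly one new $A_\ell^{(-1)^k}[k]$-summand at the top while all other summands persist. The diagonal case $k=\ell$ is handled by an analogous cofiber along the $\beta$-direction, which restructures two consecutive $A_{\ell-1}^\pm$-summands (from the decomposition at $(\ell,\ell-1)$) into a new $A_\ell^{(-1)^\ell}[\ell]$-summand plus a new diagonal pair $A_{\ell-1}^{(-1)^{\ell-1}}[\ell-1]\oplus A_{\ell-1}^{(-1)^\ell}[\ell]$; this restructuring is the main conceptual content to verify directly from the explicit product-cell boundary formulas at position $(\ell,\ell)$.
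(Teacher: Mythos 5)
Your high-level strategy—set up the product $Q_8$-CW structure on $S^{k\alpha}\wedge S^{\ell\beta}$, classify the cells $(r,t)$ by orbit type, analyze the $\Z/4$-fixed points, and then peel off the summands by a cofiber induction—is the same strategy the paper uses, and your opening paragraph about orbit types and how they behave under $\Z/4$-fixed points is essentially correct. (The paper's own proof is essentially the same inductive argument, run in the opposite direction: it picks the top cell, follows the differential down to the first non-free cell, strips that subcomplex off, and repeats, rather than building up from lower $(k,\ell)$.)

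However, the concrete description of the summands is not right. You identify the $A_\ell^{(-1)^s}[s]$-summand with the set of product cells $\{(s-j,j): 0\le j\le\ell\}$, but every cell in that set has total dimension $(s-j)+j = s$, so the whole set sits in a single chain degree. The chain complex $A_\ell^{(-1)^s}[s]$ is nontrivial in degrees $s,\,s+1,\,\dots,\,s+\ell$ (a rank-one piece at degree $s$ and $\Z[C']$ in degrees $s+1$ through $s+\ell$), so no anti-diagonal set of cells can realize it, and the ``alternating $1\pm\sigma$ differentials'' you refer to cannot run between cells of the same dimension. The summand associated to a given $s\ge\ell$ must instead pick up one rank-one boundary cell of degree $s$ together with $\ell$ rank-two cells in degrees $s+1$ through $s+\ell$; concretely it is a vertical or horizontal strip of length $\ell+1$ in the $(r,t)$-grid, not an anti-diagonal, and for $s<\ell$ the paired summands come from such strips truncated near the boundary. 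This is not a cosmetic slip: the incorrect indexing means the object you claim to be a subcomplex is not closed under the differential, so the rest of the verification as written cannot go through.

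A second point worth flagging: the inductive cofiber sequence produces a short exact sequence of $\Z[C']$-chain complexes, and the desired conclusion is a \emph{direct sum} decomposition. You assert that ``all other summands persist'' and one new summand appears at the top, but you never address why the short exact sequence of chain complexes splits. The paper's own proof also compresses this point into the phrase ``take the cokernel of this subcomplex, which turns out to be a direct sum,'' so you are in similar company, but since the lemma is a chain-level statement (not merely a statement about homology), the splitting does need to be justified, e.g., by exhibiting an explicit retraction or by a change of basis that makes the off-strip components of the differential vanish. Fixing the strip indexing and then verifying that retraction at the same time would make the argument complete.
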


\begin{proof}
Smashing $S^{k\alpha}, S^{\ell\beta}$ together, we have the standard CW structure ofr $S^{k\alpha+\ell\beta}$. Choose a generator of the top cohomology class, and map it by differentials of the chain complex, until it hits a cell which is not free (coming from $S^{\ell\beta}$ given $k\leq \ell$), then take the cokernel of this subcomplex, which turns out to be a direct sum. Then the result follows by induction.
\end{proof}

As an illustration, when $k=7, \ell=5$, the $(\text{ker} \gamma)$-fixed point is decomposed as the direct sum of the blobs in Figure \ref{fig1}. A square represens a copy of $\Z$.
\begin{figure}\label{fig1}
\centering
\includegraphics[width=8cm]{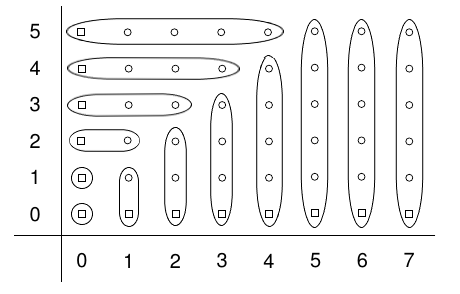}
\caption{$C_*(S^{k\alpha+\ell\beta};\underline{\Z})^{\Z/2}$ when $k=7 ,\ell=5$ }\label{fig1}
\end{figure}

\vspace{2mm}

\vspace{10mm}

\section{The Main Results}\label{sp4}

\vspace{2mm}

To get the result, we compute the cofiber of the map \rref{cnm}, then smash the chain complex with the chain complex of $S^{m\gamma}$, and finally take (co)homology. For this purpose, let $A^+_{s}(m),A^-_{s}(m)$ respectively be the result of smashing $A^+_s,A^-_s$ with $S^{m\gamma}.$ Then we have 

\beg{asm}{
A^\pm_{s}(m)=
\left\{\begin{array}{ll}
A^\pm_{s+m} & \text{$s+m\geq 0$}\\
(A^\pm_{-m-s})^\vee  & \text{$s+m<0$}
\end{array}\right.
}

The homology of the $\mathbb{Z}/2$-fixed points of these chain complexes are given in the following proposition.

\begin{proposition} \label{p1} Taking homology of the $\mathbb{Z}/2$-fixed points, we have 
$$H_{q}(A^+_s)^{\Z/2}=
\left\{\begin{array}{ll}
\mathbb{Z} & \text{$q=s$, $s$ even}\\
\mathbb{Z}/2 & \text{$0\leq q <s$, $q$ even}\\
0 & \text{else}
\end{array}\right.$$

$$H_{q}((A^+_s)^\vee)^{\Z/2}=
\left\{\begin{array}{ll}
\mathbb{Z} & \text{$q=-s$, $s$ even}\\
\mathbb{Z}/2 & \text{$-s\leq q \leq 0$, $q$ odd}\\
0 & \text{else}
\end{array}\right.$$

$$H_{q}(A^-_s)^{\Z/2}=
\left\{\begin{array}{ll}
\mathbb{Z} & \text{$q=s$, $s$ odd}\\
\mathbb{Z}/2 & \text{$0\leq q <s$, $q$ odd}\\
0 & \text{else}
\end{array}\right.$$

$$H_{q}((A^-_s)^\vee)^{\Z/2}=
\left\{\begin{array}{ll}
\mathbb{Z} & \text{$q=-s$, $s$ odd}\\
\mathbb{Z}/2 & \text{$-s\leq q \leq -2$, $q$ even}\\
0 & \text{else}
\end{array}\right.$$

\end{proposition}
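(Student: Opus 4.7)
The plan is to pass to $C'$-fixed points and reduce each of the four homology calculations to an elementary computation on a chain complex of copies of $\Z$ in which every differential is either $0$ or multiplication by $2$.

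The central dictionary is the following. We have $\Z[C']^{C'}=\Z\cdot(1+\sigma)$, identified with $\Z$ via the norm generator; $\Z^{C'}=\Z$; and $(\Z^-)^{C'}=0$. Under these identifications, left multiplication by $1-\sigma$ induces the zero map on fixed points (since $(1-\sigma)(1+\sigma)=0$), while left multiplication by $1+\sigma$ induces multiplication by $2$ (since $(1+\sigma)(1+\sigma)=2(1+\sigma)$). Correspondingly, the augmentation $\Z[C']\to\Z$ induces multiplication by $2$ on fixed points, whereas the twisted augmentation $\Z[C']\to\Z^-$ induces the zero map.

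Feeding this dictionary into the explicit formulas for $A_s^\pm$ recalled in Section~\ref{sp3}, $(A_s^+)^{C'}$ becomes the chain complex
$$\Z\xrightarrow{\varepsilon_s}\Z\xrightarrow{\varepsilon_{s-1}}\cdots\xrightarrow{\varepsilon_2}\Z\xrightarrow{2}\Z$$
in homological degrees $s,s-1,\ldots,0$, where $\varepsilon_i=2$ for $i$ odd and $\varepsilon_i=0$ for $i$ even. The complex $(A_s^-)^{C'}$ has the same shape but with the degree-zero $\Z$ replaced by $0$ (since $(\Z^-)^{C'}=0$) and with the $2/0$ alternation oppositely phased. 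For the duals $(A_s^\pm)^\vee$, which model smashing with the negative representation sphere $S^{-s\gamma'}$, taking $C'$-fixed points yields the cochain dual of $(A_s^\pm)^{C'}$ placed in non-positive homological degrees $0,-1,\ldots,-s$; this is again a string of $\Z$'s joined by alternating $0$'s and $2$'s, but now with the multiplication-by-$2$ appearing at the top of the complex rather than the bottom.

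The final step is pure bookkeeping: in each of the four complexes, $H_q$ is read off term by term, since a copy of $\Z$ in degree $q$ contributes a full $\Z$ to $H_q$ exactly when both adjacent differentials vanish, contributes $\Z/2$ when the outgoing differential is $0$ while the incoming one is multiplication by $2$, and contributes $0$ otherwise. Matching parities of $q$ against the parity of $s$ in each of the four complexes produces the four cases of the proposition. I anticipate no conceptual obstacle; the main point requiring care is the endpoint behavior, since whether the terminal $\Z$ (or the top $\Z$ of each dual complex) carries a vanishing boundary depends on the parity of $s$, and this is precisely what forces the free $\Z$ summand to appear only in the parities claimed.
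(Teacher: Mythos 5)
The paper's own ``proof'' of Proposition~\ref{p1} is simply a citation to \cite{l} and \cite{stong}, so you are doing more work than the paper by supplying an actual chain-level argument. Your dictionary for the maps induced on $C'$-fixed points is correct, and your computation of $H_q(A_s^+)^{\Z/2}$ and $H_q(A_s^-)^{\Z/2}$ is a clean and valid proof of the first and third displayed formulas: the alternating $2/0$ pattern, with the endpoint behavior forced by the parity of $s$ and by $(\Z^-)^{C'}=0$, gives exactly the claimed answer.

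The gap is in the dual cases. You assert that taking $C'$-fixed points of $(A_s^\pm)^\vee$ ``yields the cochain dual of $(A_s^\pm)^{C'}$,'' i.e.\ you are using the identification $\bigl((A_s^\pm)^\vee\bigr)^{C'}\cong\bigl((A_s^\pm)^{C'}\bigr)^\vee$. This is the crux of the argument for the second and fourth formulas, and it is not automatic: for a $\Z[C']$-module $M$ that is finitely generated free over $\Z$, one has $\bigl(\operatorname{Hom}_\Z(M,\Z)\bigr)^{C'}=\operatorname{Hom}_{\Z[C']}(M,\Z)$, while $\bigl(M^{C'}\bigr)^\vee=\operatorname{Hom}_\Z(M^{C'},\Z)$, and the natural comparison map between these (restriction of an equivariant functional to the fixed submodule) is multiplication by $2$ when $M=\Z[C']$ but the identity when $M=\Z$ or $M=\Z^-$. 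Concretely, the fixed points of $\operatorname{aug}^\vee\colon\Z\to\operatorname{Hom}(\Z[C'],\Z)$ send a generator to the $C'$-invariant functional $e_1^*+e_\sigma^*$, which under the Yoneda identification $\operatorname{Hom}_{\Z[C']}(\Z[C'],\Z)\cong\Z$ (evaluation at $1$) is multiplication by $1$, not $2$; whereas the $\Z$-dual of $\operatorname{aug}\vert_{C'}\colon\Z(1+\sigma)\to\Z$ (which is $\times 2$) is again $\times 2$. So the two chain complexes you are conflating have different differentials precisely at the interface between a free term $\Z[C']$ and the non-free end term $\Z$ (or $\Z^-$), and this interface is exactly what controls the homology classes near $q=0$ and $q=-s$. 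Before the bookkeeping step can be carried out, you need to pin down which model of $\vee$ the paper is using on the non-free end terms, and justify the identification you are invoking; as written, the argument silently passes through a step that is false for the literal $\operatorname{Hom}_\Z(-,\Z)$ dual, so the proof of the two $\vee$ formulas is incomplete.
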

\begin{proof} See \cite{l} and \cite{stong}.
\end{proof}

\vspace{2mm}

Now let $\Theta^+_{n,m}$ and $\Theta^-_{n,m}$ respectively be the result of smashing $\Theta^+_{n,0},\Theta^-_{n,0}$ with $S^{m\gamma}$. When $m\leq 2$, $\Theta^+_{n,m}$ is the totalization of the following double chain complex:
$$C(0)_m\xrightarrow{f_0[2-m]} C(1)[2-m] \xrightarrow{f_1[7-m]} C(2)[7-m] \rightarrow ...\rightarrow C(n)[5n-m-3],$$
where $C(0)_m$ is 
$$0\rightarrow \Z[C']\xrightarrow{1+(-1)^0\sigma}\Z[C']\xrightarrow{1+(-1)^1\sigma}\Z[C']\rightarrow ...\xrightarrow{1+(-1)^{1-m}\alpha}\Z[C']\rightarrow 0.$$

\vspace{2mm}

When $m>2$, $\Theta^+_{n,m}$ is the totalization of the following double chain complex: 
$$C(0)_m[2-m]\xrightarrow{\widetilde{f_0}[2-m]} C(1)[2-m] \xrightarrow{f_1[7-m]}  ...\rightarrow C(n)[5n-m-3],$$
where $C(0)_m$ is 
$$0 \rightarrow \Z[C'] \xrightarrow{1+(-1)^{m-3}\sigma}\Z[C']\xrightarrow{1+(-1)^{m-4}\sigma}...\xrightarrow{1+(-1)^0\sigma}\Z[C']\rightarrow 0$$
and $\widetilde{f_0}:C(0)_m\rightarrow C(1)$ is given by $1-\sigma$ at the bottom degree of the both chain complexes.

\vspace{2mm}

As an example, the chain complex $\Theta^+_{2,-2}$ could be presented as 
\begin{equation}{\resizebox{0.9\hsize}{!}{\xymatrix{
\circ \ar[r]^{1-\sigma} &\circ \ar[r]^{1+\sigma} &\circ & & & & & & &\\
 &\circ \ar[r]^{1+\sigma}\ar[ur]^{1-\sigma} &\circ\ar[r]^{1-\sigma} &\circ \ar[r]^{4+4\sigma} &\circ \ar[r]^{1-\sigma} &\circ \ar[r]^{1+\sigma} &\circ & & &\\
& & & & &\circ \ar[r]^{1+\sigma}\ar[ur]^{1-\sigma} &\circ \ar[r]^{1-\sigma} &\circ \ar[r]^{1+\sigma} &\circ \ar[r] &\square
}}}\end{equation}

\vspace{3mm}
 
Similarly, when $m\leq 2$, $\Theta^-_{n,m}$ is the totalization of the following double chain complex:
$$C^-(0)_m\xrightarrow{f^-_0[2-m]} C^-(1)[2-m] \xrightarrow{f^-_1[7-m]} C(2)[7-m] \rightarrow ...\rightarrow C^-(n)[5n-m-3],$$
where $C^-(0)_m$ is 
$$0\rightarrow \Z[C']\xrightarrow{1-(-1)^0\sigma}\Z[C']\xrightarrow{1-(-1)^1\sigma}\Z[C']\rightarrow ...\xrightarrow{1-(-1)^{1-m}\alpha}\Z[C']\rightarrow 0.$$
\vspace{2mm}
When $m>2$, $\Theta^-_{n,m}$ is the totalization of the following double chain complex: 
$$C^-(0)_m[2-m]\xrightarrow{\widetilde{f^-_0}[2-m]} C^-(1)[2-m] \xrightarrow{f^-_1[7-m]}  ...\rightarrow C^-(n)[5n-m-3],$$
where $C^-(0)_m$ is 
$$0 \rightarrow \Z[C'] \xrightarrow{1-(-1)^{m-3}\sigma}\Z[C']\xrightarrow{1-(-1)^{m-4}\sigma}...\xrightarrow{1-(-1)^0\sigma}\Z[C']\rightarrow 0$$
and $\widetilde{f^-_0}:C^-(0)_m\rightarrow C^-(1)$ is given by $1+\sigma$ at the bottom degree of the both chain complexes.

\vspace{2mm}

As an example, the chain complex $\Theta^-_{2,5}$ could be presented as 

$$
\xymatrix{
\circ \ar[r]^{1-\sigma} &\circ \ar[r]^{1+\sigma} &\circ &  &  & &\\
 &\circ \ar[r]^{1+\sigma}\ar[ur]^{1-\sigma} &\circ\ar[r]^{1-\sigma} &\circ \ar[r]^{4+4\sigma} &\circ \ar[r]^{1-\sigma} &\circ \ar[r]^{1+\sigma} &\circ\\
& &\square \ar[r]^{1+\sigma} &\circ\ar[r]^{1-\sigma} &\circ\ar[r]^{1+\sigma} &\circ \ar[ur]^{1-\sigma} &
}$$

\begin{proposition} \label{p2} 
When $m\leq 0$, we have
$$H_{q}((\Theta^+_{n,m})^{\Z/2}) =
\left\{\begin{array}{ll}
\mathbb{Z} & \text{$q=0$ and $m$ even}\\
\mathbb{Z}/2 & \text{$0\leq q \leq -m$, $q\equiv m+1$ mod 2}\\
\mathbb{Z}/2\oplus \mathbb{Z}/2 & \text{$-m\leq q \leq 4n-m-1$, $q\equiv -m+1$ mod 4}\\
\mathbb{Z}/8 & \text{$-m\leq q \leq 4n-m-1$, $q\equiv -m+3$ mod 4}\\
0 & \text{else}
\end{array}\right.$$

When $m>0$, we have 
$$H_{q}((\Theta^+_{n,m})^{\Z/2}) =
\left\{\begin{array}{ll}
\mathbb{Z} & \text{$q=0$ and $m$ even}\\
\mathbb{Z}/2 & \text{$q=m-2$, or}\\
 & \text{$0\leq q \leq 3-m$ and $q\equiv -m+3$ mod 2}\\
\mathbb{Z}/2\oplus \mathbb{Z}/2 & \text{$m+2-4n\leq q \leq m-6$, $q\equiv m-2$ mod 4}\\
\mathbb{Z}/8 & \text{$m-4n\leq q \leq m-4$, $q\equiv m$ mod 4}\\
0 & \text{else}
\end{array}\right.$$

\vspace{2mm}

And when $m\leq 0$, we have
$$H_{q}((\Theta^-_{n,m})^{\Z/2}) =
\left\{\begin{array}{ll}
\mathbb{Z} & \text{$q=0$ and $m$ odd}\\
\mathbb{Z}/2 & \text{$0\leq q \leq -m$, $q\equiv m$ mod 2, or}\\
 & \text{$-m\leq q \leq 4n-m-1$, $q+m\equiv 0,1,2$ mod 4}\\
0 & \text{else}
\end{array}\right.$$

Finally, when $m>0$, we present the homology of $(\Theta^-_{n,m})^{\Z/2})$ as a sum of two chain complexes
$$H_*((\Theta^-_{n,m})^{\Z/2}) \cong H_*((\Theta^-_{n,2})^{\Z/2})[2-m] \oplus H_*(C^-(0)_m[2-m])^{\Z/2}$$
where
$$H_{q}((\Theta^-_{n,2})^{\Z/2}) =
\left\{\begin{array}{ll}
\mathbb{Z}/2 & \text{$-1 \leq q \leq 4n-3$, $q \equiv 0,2,3$ mod 4,}\\
0 & \text{else}
\end{array}\right.$$
and
$$H_{q}(C^-(0)_m[2-m])^{\Z/2}=
\left\{\begin{array}{ll}
\mathbb{Z} & \text{$q=0$ and $m$ odd}\\
\mathbb{Z}/2 & \text{$-1 \leq q \leq 2-m$, $q \equiv m+1$ mod 2,}\\
0 & \text{else}
\end{array}\right.$$

\end{proposition}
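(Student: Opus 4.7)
My plan is to compute the homology by direct analysis of the $\mathbb{Z}/2$-invariants of the explicit totalized double complexes $\Theta^{\pm}_{n,m}$. The invariants functor sends $1+\sigma$ to multiplication by $2$, $1-\sigma$ to $0$, and $4+4\sigma$ to $8$; for the twisted complexes $\Theta^-_{n,m}$ the roles of $1\pm\sigma$ are swapped. Each $\mathbb{Z}[C']$ contributes one copy of $\mathbb{Z}$, so the problem reduces to computing the homology of a concrete integer chain complex whose differentials all lie in $\{0, 2, 8\}$.

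For $\Theta^+_{n,m}$, the cross maps $f_r$ and $\widetilde f_0$ are all of the form $1-\sigma$ and hence vanish on invariants; the fixed-point complex therefore splits as a direct sum of the blocks $C(r)^{\mathbb{Z}/2}$ for $0 \leq r \leq n$ together with the $m\gamma$-suspension block $C(0)_m^{\mathbb{Z}/2}$. For $1\leq r\leq n-1$ each interior block becomes the six-term complex $\mathbb{Z}\xrightarrow{2}\mathbb{Z}\xrightarrow{0}\mathbb{Z}\xrightarrow{8}\mathbb{Z}\xrightarrow{0}\mathbb{Z}\xrightarrow{2}\mathbb{Z}$, contributing the $\mathbb{Z}/2,\,\mathbb{Z}/8,\,\mathbb{Z}/2$ torsion at the three stated positions. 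The truncated endpoint blocks $C(0)^{\mathbb{Z}/2}$ and $C(n)^{\mathbb{Z}/2}$ account for the $\mathbb{Z}$ at $q=0$ for even $m$ and the extra $\mathbb{Z}/2$'s at the boundary. The extension $C(0)_m^{\mathbb{Z}/2}$ arising from the $m\gamma$-suspension is read off from Proposition \ref{p1} applied to $A^{\pm}_m$ and its dual, producing the extra low-degree $\mathbb{Z}/2$'s and accounting for the parity distinction between the $m\leq 0$ and $m>0$ formulas.

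For $\Theta^-_{n,m}$, the cross maps $f_r^-$ equal $1+\sigma$, which becomes multiplication by $2$ on invariants rather than vanishing. I would compute the homology by explicitly tracking how the kernels of the now-zero internal $(1-\sigma)$ differentials are killed modulo $2$ by the non-zero cross maps. Carrying this out across the five-term chunks of $C^-(r)^{\mathbb{Z}/2}$ yields the mod-$4$ pattern $q+m\equiv 0,1,2$ versus $3$ in the $m\leq 0$ formula, with the surviving $\mathbb{Z}$ at $q=0$ for odd $m$ arising from the single unkilled augmentation generator analogous to the class from Proposition \ref{p1}.

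For the $m>0$ case of $\Theta^-_{n,m}$, the decomposition relies on the fact that $\widetilde f_0^-$ touches only the bottom of $C^-(0)_m$, so the positions added beyond $C^-(0)_2$ sit at the top of $C^-(0)_m$ and connect to the rest of the totalized complex only through the internal differentials of $C^-(0)_m$. After invariants, the alternating-sign structure of those differentials causes the contribution of the added positions to split off in homology as $H_*(C^-(0)_m[2-m])^{\mathbb{Z}/2}$, while the remainder computes $H_*((\Theta^-_{n,2})^{\mathbb{Z}/2})[2-m]$. I expect the main obstacle to be verifying this decoupling carefully: one must confirm that the boundary contributions of the enlarged complex vanish on invariants so that the homology splits cleanly, a check that requires careful bookkeeping of the sign patterns $1-(-1)^k\sigma$ in the internal differentials of $C^-(0)_m$.
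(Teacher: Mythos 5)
Your proposal is correct and takes the same route as the paper, which simply declares the result a direct computation on the grounds that each degree of the totalized complex carries at most two $\mathbb{Z}$-summands and all differentials on invariants lie in $\{0,2,8\}$. You correctly isolate the decisive facts---$1\pm\sigma$ becomes $2$ or $0$ after passing to $\mathbb{Z}/2$-invariants, so the $1-\sigma$ cross maps decouple $\Theta^+_{n,m}$ into a direct sum of the blocks $C(r)^{\mathbb{Z}/2}$, the $\mathbb{Z}/2\oplus\mathbb{Z}/2$ entries then arising because consecutive blocks each contribute a $\mathbb{Z}/2$ at the same total degree, while the $1+\sigma$ cross maps in $\Theta^-_{n,m}$ become $2$ and genuinely couple the blocks---and the $m>0$ splitting concern you flag is precisely the bookkeeping step the paper leaves implicit.
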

\begin{proof}As seen from the above definition, the chain complexes in these have fewer than three copies of $\Z$ in each dimension, also the differentials are simple. Thus we can proceed by direct computation.
\end{proof}

With all the ingredient described, we may write down the first case of the main result.

\begin{theorem}\label{t1} For $k\geq \ell \geq 0$ and $n\geq 0$, as a $\mathbb{Z}/2$-equivariant chain complex,

\begin{align*}
C^{Q_8}_*(S^{k\alpha+\ell\beta+m\gamma+n\rho})^{\Z/4} & = \bigoplus_{s=\ell}^{k-1}A_\ell^{(-1)^s}(m)[s] \oplus \bigoplus_{s=0}^{\ell-1} A_s^{(-1)^s}(m)[s]\\ \nonumber &\oplus A_s^{(-1)^{s+1}}(m)[s+1])\oplus \Theta^{(-1)^{(k+1)(\ell+1)}}_{n,-\ell-m}[\ell].
\end{align*}
The homology of all the chain complexes involved, are computed in Proposition \ref{p1} and Proposition \ref{p2}.
\end{theorem}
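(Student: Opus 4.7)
The strategy is to smash the isotropy separation cofiber sequence \rref{is1} with $\Sigma^{k\alpha+\ell\beta+m\gamma}H\underline{\Z}$, then take $\Z/4$-fixed points of the resulting Bredon cellular chain complexes. This yields a cofiber sequence of $\Z/2 = Q_8/\Z/4$-equivariant chain complexes whose third term is the object I wish to identify.

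To compute the middle term $C_*(S^{k\alpha+\ell\beta+m\gamma})^{\Z/4}$, I would exploit that $\ker\gamma = \langle ij\rangle = \Z/4$: the representation $\gamma$ restricts trivially to $\Z/4$, and the residual $\Z/2$-action on $S^{m\gamma}$ is by the sign representation $\gamma'$. Consequently, at the level of $\Z/4$-fixed-point chain complexes, smashing with $S^{m\gamma}$ is precisely the operation $A\mapsto A(m)$ defined in \rref{asm}. Applying this termwise to the decomposition of $C_*(S^{k\alpha+\ell\beta})^{\Z/4}$ proved in the preceding lemma gives
$$C_*(S^{k\alpha+\ell\beta+m\gamma})^{\Z/4} = \bigoplus_{s=\ell}^{k} A_\ell^{(-1)^s}(m)[s] \oplus \bigoplus_{s=0}^{\ell-1}\bigl(A_s^{(-1)^s}(m)[s] \oplus A_s^{(-1)^{s+1}}(m)[s+1]\bigr).$$

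Next I would identify the first term $C_*(\Sigma^{k\alpha+\ell\beta+m\gamma}S(n\rho)_+)^{\Z/4}$ with one of the $\Theta^\pm_{n,\bullet}$ complexes constructed in Section \ref{sp3}. Since $S(n\rho)_+$ is $Q_8$-free, its smash product with $S^{k\alpha+\ell\beta+m\gamma}$ has free cells, and the resulting $\Z/4$-fixed-point chain complex is governed by how the $Q_8$-action twists the top class of $S^{k\alpha+\ell\beta}$. An orientation computation analogous to Lemma \ref{dnr} identifies this complex (up to a shift) with $\Theta^{(-1)^{(k+1)(\ell+1)}}_{n,-\ell-m}$, and incorporating it into the cofiber sequence produces the summand $\Theta^{(-1)^{(k+1)(\ell+1)}}_{n,-\ell-m}[\ell]$ of the target.

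Finally, I would assemble the cofiber. The connecting map from the first term into the middle term factors through the top summand $A_\ell^{(-1)^k}(m)[k]$, i.e.\ the $s=k$ summand of the first $\bigoplus$ above, because this is the summand that carries the top-dimensional cell of $S^{k\alpha+\ell\beta+m\gamma}$ to which the attaching maps out of $S(n\rho)_+$ arrive. In the cofiber this top summand is absorbed into the $\Theta$-complex, while the remaining summands persist as direct summands; this yields the claimed decomposition, with the first summation now restricted to $s=\ell,\dots,k-1$ and the extra $\Theta^{(-1)^{(k+1)(\ell+1)}}_{n,-\ell-m}[\ell]$ factor appended. The main obstacle is justifying this ``top-summand absorption'' and verifying the precise sign $(-1)^{(k+1)(\ell+1)}$, both of which reduce to careful cell-by-cell orientation bookkeeping in the spirit of Lemma \ref{dnr}.
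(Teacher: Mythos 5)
Your outline follows the same route as the paper: smash the isotropy separation cofiber sequence \rref{is1} with $\Sigma^{k\alpha+\ell\beta+m\gamma}H\underline{\Z}$, pass to $\Z/4$-fixed points of the Bredon chain complexes, use the decomposition lemma for the middle term, and assemble the cofiber. The decomposition of the middle term into $A_s^{\pm}(m)[\cdot]$ is handled correctly, and your observation that the residual $C'=\Z/2$-action on $S^{m\gamma}$ is via the sign representation $\gamma'$ is exactly right.

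However, there is a genuine imprecision in how you treat the $\Theta$ summand. You first claim that the free term $C_*(\Sigma^{k\alpha+\ell\beta+m\gamma}S(n\rho)_+)^{\Z/4}$ itself is (up to a shift) $\Theta^{(-1)^{(k+1)(\ell+1)}}_{n,-\ell-m}$, and then say that ``incorporating it into the cofiber sequence produces the summand $\Theta^{(-1)^{(k+1)(\ell+1)}}_{n,-\ell-m}[\ell]$.'' These two statements cannot both be literally true: if the free term were already $\Theta_{n,-\ell-m}^{\pm}[\,\cdot\,]$, then passing to the cone against $A_\ell^{(-1)^k}(m)[k]$ would \emph{not} return $\Theta_{n,-\ell-m}^{\pm}[\ell]$ again, since the cone picks up the $\ell+1$ extra cells of the top $A$-summand. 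What is actually going on --- and what the construction of $\Theta^{\pm}_{n,m}$ in Section~\ref{sp3} is designed to encode --- is that the free term by itself corresponds, via complex stability, to $\Theta^{\pm}_{n,\epsilon}$ for some $\epsilon\in\{0,1\}$ (up to shift), and it is only the \emph{cone} of the map into the top summand $A_\ell^{(-1)^k}(m)[k]$ that is identified with $\Theta^{\pm}_{n,-\ell-m}[\ell]$. The parameter $-\ell-m$ in the second slot absorbs both the $\gamma$-suspension and the extra $C(0)$-cells coming from the absorbed $A$-summand, which is why the length of $C(0)_{-\ell-m}$ exceeds that of $C(0)$ by exactly $\ell+m$. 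You flag the ``top-summand absorption'' as the main obstacle yourself, so this is consistent with your self-assessment; but as written the identification of the free term with $\Theta^{\pm}_{n,-\ell-m}$ is a misstatement, and the argument as given would double-count or mis-shift the $C(0)$-portion of the resulting complex. Separately, the paper's order of operations is the reverse of yours (take the cofiber at $m=0$ first, then smash with $S^{m\gamma}$), but since smashing with $S^{m\gamma}$ is exact, the two orders give the same answer and this is only a stylistic difference.
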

\qed

\vspace{2mm}

By Spanier-Whitehead duality, it suffices to furthermore consider the case $n<0$. Essentially, this means $n$ and $k,\ell$ have different signs. If, say, $k,\ell<0$ and $n,m>0$, we can flip all the signs and compute the cohomology instead.

So here we assume $k\geq \ell\geq 0,$ and $n<0$. The cofiber sequence now looks like 
$$S(-n\rho)_+\rightarrow S^0 \rightarrow S^{-n\rho}.$$
Take the dual of this sequence, we have 
$$S^{n\rho}\rightarrow S^0 \rightarrow DS(-n\rho)_+.$$
The dual of $S(-n\rho)_+$ is $\Sigma^{n\rho+1}S(-n\rho)_+$, hence 
$$S^{n\rho}\rightarrow S^0\rightarrow \Sigma^{n\rho+1}S(-n\rho)_+.$$
Define now for $n<0$:
$$\Theta^\pm_{n,m}:=\text{Hom}(\Theta^\pm_{-n,m},\Z),$$
so that $\Theta^+_{n,m}$ and $\Theta^-_{n,m}$ are still results of smashing $\Theta^+_{n,0},\Theta^-_{n,0}$ with $S^{m\gamma}$. Their homology is recorded in the following proposition whose proof is analogous to Proposition \ref{p2}.

\begin{proposition} \label{p3} Let $n<0$.
When $m\leq 0$, we have
$$H_{q}((\Theta^+_{n,m})^{\Z/2}) =
\left\{\begin{array}{ll}
\mathbb{Z} & \text{$q=0$ and $m$ even}\\
\mathbb{Z}/2 & \text{$m-1\leq q \leq -1$, $q\equiv m$ mod 2}\\
\mathbb{Z}/2\oplus \mathbb{Z}/2 & \text{$m+4n \leq q \leq m-1$, $q\equiv m-2$ mod 4}\\
\mathbb{Z}/8 & \text{$m+4n \leq q \leq m-1$, $q\equiv m$ mod 4}\\
0 & \text{else}
\end{array}\right.$$

When $m>0$, we have 
$$H_{q}((\Theta^+_{n,m})^{\Z/2}) =
\left\{\begin{array}{ll}
\mathbb{Z} & \text{$q=0$ and $m$ even}\\
\mathbb{Z}/2 & \text{$q=-m+1$, or}\\
 & \text{$-m+2\leq q \leq -1$ and $q\equiv m$ mod 2}\\
\mathbb{Z}/2\oplus \mathbb{Z}/2 & \text{$5-m\leq q \leq -4n-m-3$, $q\equiv -m+1$ mod 4}\\
\mathbb{Z}/8 & \text{$3-m\leq q \leq -4n-m-1$, $q\equiv -m+3$ mod 4}\\
0 & \text{else}
\end{array}\right.$$

\vspace{2mm}

And when $m\leq 0$, we have
$$H_{q}((\Theta^-_{n,m})^{\Z/2}) =
\left\{\begin{array}{ll}
\mathbb{Z} & \text{$q=0$ and $m$ odd}\\
\mathbb{Z}/2 & \text{$m-1\leq q \leq 1$, $q\equiv m+1$ mod 2, or}\\
 & \text{$m+4n\leq q \leq m-1$, $q-m\equiv 1,2,3$ mod 4}\\
0 & \text{else}
\end{array}\right.$$

Finally, when $m>0$, we present the homology of $(\Theta^-_{n,m})^{\Z/2}$  as a sum of two chain complexes
$$H_*((\Theta^-_{n,m})^{\Z/2}) \cong H_*((\Theta^-_{n,2})^{\Z/2})[2-m] \oplus H_*(C^-(0)_m[2-m])^{\Z/2}$$
where
$$H_{q}((\Theta^-_{n,2})^{\Z/2}) =
\left\{\begin{array}{ll}
\mathbb{Z}/2 & \text{$4n+2 \leq q \leq 0$, $q \equiv 0,1,3$ mod 4,}\\
0 & \text{else}
\end{array}\right.$$
and
$$H_{q}(C^-(0)_m[2-m])^{\Z/2}=
\left\{\begin{array}{ll}
\mathbb{Z} & \text{$q=0$ and $m$ odd}\\
\mathbb{Z}/2 & \text{$m-3 \leq q \leq 0$, $q \equiv m$ mod 2,}\\
0 & \text{else}
\end{array}\right.$$

\end{proposition}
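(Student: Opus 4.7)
The plan is to reduce Proposition \ref{p3} to Proposition \ref{p2} using the defining identity $\Theta^{\pm}_{n,m}=\operatorname{Hom}(\Theta^\pm_{-n,m},\Z)$ (for $n<0$), together with the universal coefficient theorem. The key observation is that $\Z[C']$ carries a canonical $C'$-equivariant self-dual pairing $\sigma^i\otimes\sigma^j\mapsto\delta_{ij}$ under which both $1\pm\sigma$ and $4+4\sigma$ are self-adjoint. Hence dualizing the explicit double complexes defined above simply reverses every arrow, keeps every label, and negates every homological degree. Moreover, $C'$-fixed points commute with dualization on each $\Z[C']$-summand (since $\Z[C']$ is $\Z[C']$-free) and also on the augmentation summands $\Z$ and $\Z^-$ that appear at the ends of $C^\pm(0)_m$, so $(\Theta^\pm_{n,m})^{C'}$ is naturally the $\Z$-dual of $(\Theta^\pm_{-n,m})^{C'}$.

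With this identification in hand, universal coefficients give
$$H_q((\Theta^\pm_{n,m})^{C'})\cong\operatorname{Hom}(H_{-q}((\Theta^\pm_{-n,m})^{C'}),\Z)\oplus\operatorname{Ext}(H_{-q-1}((\Theta^\pm_{-n,m})^{C'}),\Z).$$
Plugging in the homology groups computed in Proposition \ref{p2}, each free $\Z$-summand at degree $0$ is preserved by the $\operatorname{Hom}$ term, while each finite cyclic summand $\Z/N$ at degree $q$ reappears as $\Z/N$ at degree $-q-1$ via the $\operatorname{Ext}$ term. The involution $q\mapsto-q-1$ converts the residue conditions of Proposition \ref{p2} into those of Proposition \ref{p3}: for example $q\equiv m+1\pmod 2$ becomes $q\equiv m\pmod 2$, $q\equiv -m+1\pmod 4$ becomes $q\equiv m-2\pmod 4$, and $q\equiv -m+3\pmod 4$ becomes $q\equiv m\pmod 4$. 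Likewise the range $-m\le q\le 4(-n)-m-1$ from Proposition \ref{p2} transforms into $m+4n\le q\le m-1$ in Proposition \ref{p3} (with $n<0$), and analogous index shifts handle the remaining cases.

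The main obstacle will be careful bookkeeping in the $m>0$ regime, where the source complex $C^\pm(0)_m$ is shifted by $[2-m]$ and connected by the modified chain map $\widetilde{f^\pm_0}$. One must track how these shifts interact with dualization, and, in the $\Theta^-$ case, confirm that the direct-sum decomposition $H_*((\Theta^-_{n,m})^{C'})\cong H_*((\Theta^-_{n,2})^{C'})[2-m]\oplus H_*(C^-(0)_m[2-m])^{C'}$ from Proposition \ref{p2} survives the duality. This reduces to applying the universal coefficient sequence to each summand independently, since both direct summands are themselves complexes of finitely generated free $\Z$-modules. Alternatively, the same homology can be read off by direct computation on the dualized double complex, since every degree still contains at most three copies of $\Z$ and all relevant differentials on fixed points are $0$, $\pm 2$, or $\pm 8$, exactly as in the proof of Proposition \ref{p2}.
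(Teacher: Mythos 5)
Your main route — identify $(\Theta^\pm_{n,m})^{C'}$ with the $\Z$-dual of $(\Theta^\pm_{-n,m})^{C'}$ and then run the universal coefficient theorem against Proposition~\ref{p2} — is genuinely different from the paper's, which simply repeats the direct chain-level computation of Proposition~\ref{p2} (``the chain complexes have fewer than three copies of $\Z$ in each dimension, the differentials are simple''). Your route is attractive because it makes the $n\leftrightarrow -n$ symmetry structural rather than a computation to be redone, and it makes the index bookkeeping (the involution $q\mapsto -q-1$ on the torsion degrees and $q\mapsto -q$ on the free part) transparent. Two remarks of caution. First, ``$C'$-fixed points commute with dualization'' is not a natural isomorphism: the restriction map $(\Z[C']^\vee)^{C'}\to(\Z[C']^{C'})^\vee$ is multiplication by $2$, not an isomorphism. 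What is true (and suffices) is that for a complex of \emph{free} $\Z[C']$-modules, a differential given by multiplication by $x\in\Z[C']$ induces multiplication by $\operatorname{aug}(x)$ both on $(-)^{C'}$ and, after self-dual identification, on the $C'$-fixed points of the dual; so the two fixed-point complexes are isomorphic, but this requires the freeness, and the phrasing should not suggest a natural identification. Second, your parenthetical about the ``augmentation summands $\Z$ and $\Z^-$ that appear at the ends of $C^\pm(0)_m$'' would, if such summands were actually present, break the argument: an augmentation $\Z[C']\to\Z$ induces multiplication by $2$ on fixed points while its dual coaugmentation induces an isomorphism, so the two constructions would give genuinely different chain complexes (and homologies). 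Fortunately, as one checks against the explicit formula for $C^\pm(0)_m$ and the value $H_0((\Theta^+_{n,m})^{\Z/2})=\Z$ (for $m\le 0$ even) in Proposition~\ref{p2}, the $\Theta$-complexes consist entirely of free $\Z[C']$-modules — the squares in the paper's figures appear to be typos — so this caveat is vacuous, and the duality/UCT argument goes through. Your fallback (direct computation on the dualized double complex) is exactly the paper's method.
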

\qed

Suspend by $S^{k\alpha+\ell\beta}$, the connecting map connects at the top degree, and after taking the cofiber, we obtain the next case of our main result:

\begin{theorem}\label{t2} For $k\geq \ell \geq 0$ and $n< 0$, as a $\mathbb{Z}/2$-equivariant chain complex,

\begin{align*}
C^{Q_8}_*(S^{k\alpha+\ell\beta+m\gamma+n\rho})^{\Z/4} & = \bigoplus_{s=\ell}^{k-1}A_\ell^{(-1)^s}(m)[s] \oplus \bigoplus_{s=0}^{\ell-1} A_s^{(-1)^s}(m)[s]\\ \nonumber &\oplus A_s^{(-1)^{s+1}}(m)[s+1])\oplus \Theta^{(-1)^{(k+1)(\ell+1)}}_{n,\ell-m}[\ell].
\end{align*}
The homology of all the chain complexes involved, are computed in Proposition \ref{p1} and Proposition \ref{p3}.
\end{theorem}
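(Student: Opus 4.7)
The argument parallels the proof of Theorem \ref{t1}, with the isotropy separation sequence \rref{is1} replaced by its Spanier-Whitehead dual \rref{is2}. The plan is first to smash \rref{is2} with $S^{k\alpha+\ell\beta+m\gamma}$ and then take $\Z/4$-fixed points on cellular chain complexes. This produces a distinguished triangle of $\Z/2$-equivariant chain complexes whose middle term is $C_*(S^{k\alpha+\ell\beta+m\gamma})^{\Z/4}$ and whose third term is, up to an appropriate shift, the chain complex $\Theta^\pm_{n,m}$ defined above as the Hom-dual of $\Theta^\pm_{-n,m}$. The sign superscript is determined by whether the $Q_8$-action on the top orientation class of $S^{k\alpha+\ell\beta}$ preserves or reverses orientation, and this is tracked by the exponent $(-1)^{(k+1)(\ell+1)}$ exactly as in Theorem \ref{t1}.

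Next, I apply the decomposition Lemma of Section \ref{sp3} to write
$$C_*(S^{k\alpha+\ell\beta};\underline{\Z})^{\Z/2} = \bigoplus_{s=\ell}^{k}A_\ell^{(-1)^s}[s] \oplus \bigoplus_{s=0}^{\ell-1}\bigl(A_s^{(-1)^s}[s]\oplus A_s^{(-1)^{s+1}}[s+1]\bigr),$$
and then smash each summand with $S^{m\gamma}$ to convert each $A^\pm_s$ into $A^\pm_s(m)$ according to \rref{asm}. From the chain-level models built in Section \ref{sp3}, the connecting map of the distinguished triangle hits only the top cell of the summand $A_\ell^{(-1)^k}(m)[k]$, so taking the cofiber absorbs this top summand into the shifted $\Theta^\pm_{n,m}$ piece while the remaining $A^\pm_s(m)$-summands pass through unchanged.

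Finally, I identify the cofiber of the absorbed map. The chain complex consisting of $A_\ell^{(-1)^k}(m)[k]$ attached by $1\pm\sigma$ at the appropriate end of $\Theta^\pm_{n,m}$ matches, after a direct comparison of differentials against the double-complex description given in Section \ref{sp4}, the chain complex $\Theta^{(-1)^{(k+1)(\ell+1)}}_{n,\ell-m}[\ell]$. The claimed homology is then read off from Propositions \ref{p1} and \ref{p3}.

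The main obstacle is the bookkeeping needed to check that the absorbed piece carries second index $\ell-m$ rather than $-\ell-m$ (the latter appearing in Theorem \ref{t1}). The discrepancy reflects the Spanier-Whitehead dualization: for $n\ge 0$ the connecting map enters $\Theta^\pm_{n,m}$ from its low-degree end so that attaching the $A_\ell^\pm(m)[k]$ piece lengthens the $C(0)_m$-portion downward, while for $n<0$ the Hom-dualization reverses the orientation of the double complex and the same attachment lengthens it upward, flipping the sign of the effective $m\gamma$-shift in the second argument. Verifying this via the matrices of Lemma \ref{dnr} and the construction of the $\tilde{f}_0$ and $f_r$ in Section \ref{sp3} is the one genuinely combinatorial step; everything else is formal.
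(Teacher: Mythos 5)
Your proposal follows the paper's approach: the paper itself offers only a one-sentence indication before Theorem \ref{t2} ("Suspend by $S^{k\alpha+\ell\beta}$, the connecting map connects at the top degree, and after taking the cofiber, we obtain the next case of our main result"), and you correctly reconstruct the underlying argument by starting from the dual sequence \rref{is2}, smashing, taking $\Z/4$-fixed points, invoking the decomposition lemma, and observing that the connecting map interacts only with the top summand $A_\ell^{(-1)^k}(m)[k]$.

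Where your write-up goes astray is the explanation of why the second index is $\ell-m$ rather than $-\ell-m$. You say the dualization works by "flipping the sign of the effective $m\gamma$-shift in the second argument." But comparing Theorems \ref{t1} and \ref{t2}, the $m$-contribution to the second index is $-m$ in \emph{both} cases; only the $\ell$-contribution flips, from $-\ell$ to $+\ell$. That $\ell$-contribution records the lengthening of the $C(0)$-portion caused by absorbing the $A_\ell^{(-1)^k}(m)[k]$ summand, and it is the direction of that lengthening which reverses under Hom-dualization. As written, "flipping the sign of the $m\gamma$-shift" would suggest $\Theta^\pm_{n,\ell+m}$, which is wrong, so the phrasing should be corrected to say it is the $\ell$-shift (the part coming from the absorbed $A$-summand) whose sign flips. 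You also correctly flag that confirming this requires comparing differentials against the double-complex description, and you leave that step undone; the paper is equally brief here, but for a self-contained proof this index-chase is exactly the thing that needs to be exhibited.
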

\qed

Finally, we complete our discussion by adding the cohomology results. The cohomology of duals of the chain complexes $\Theta$'s and $A$'s are easily derived  from Proposition \ref{p1}, \ref{p2} and \ref{p3} using universal coefficients theorem. What is new is the decomposition of cochain complexes. The answer is the follows:
 
\begin{theorem}\label{t3} For $k\geq \ell \geq 0$ and $n\geq 0$, as a $\mathbb{Z}/2$-equivariant chain complex,

\begin{align*}
C_{Q_8}^*(S^{k\alpha+\ell\beta+m\gamma+n\rho})^{\Z/4} & = \bigoplus_{s=\ell}^{k-1}(A_\ell^{(-1)^s}(m)[s])^\vee \oplus \bigoplus_{s=0}^{\ell-1} (A_s^{(-1)^s}(m)[s])^\vee \\ \nonumber &\oplus (A_s^{(-1)^{s+1}}(m)[s+1])^\vee \oplus \Theta^{(-1)^{(k+1)(\ell+1)}}_{-n,-\ell-m}[\ell].
\end{align*}

For $k\geq \ell \geq 0$ and $n< 0$, as a $\mathbb{Z}/2$-equivariant chain complex,

\begin{align*}
C_{Q_8}^*(S^{k\alpha+\ell\beta+m\gamma+n\rho})^{\Z/4} & = \bigoplus_{s=\ell}^{k-1}(A_\ell^{(-1)^s}(m)[s])^\vee \oplus \bigoplus_{s=0}^{\ell-1} (A_s^{(-1)^s}(m)[s])^\vee\\ \nonumber &\oplus( A_s^{(-1)^{s+1}}(m)[s+1])^\vee\oplus \Theta^{(-1)^{(k+1)(\ell+1)}}_{-n,\ell-m}[\ell].
\end{align*}
\end{theorem}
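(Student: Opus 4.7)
The plan is to deduce Theorem \ref{t3} from Theorems \ref{t1} and \ref{t2} by $\Z$-linear dualization at the chain level. By definition
$$C_{Q_8}^{*}(X)^{\Z/4}=\mathrm{Hom}_{\Z}\!\bigl(C^{Q_8}_{*}(X)^{\Z/4},\Z\bigr)$$
as a $\Z/2$-equivariant cochain complex, and $\mathrm{Hom}_{\Z}(-,\Z)$ commutes with finite direct sums. Applying it to the decomposition in Theorem \ref{t1} (resp.~\ref{t2}) therefore produces at once a direct-sum decomposition of $C_{Q_8}^{*}(X)^{\Z/4}$ whose $A$-summands are exactly the termwise duals $(A_{s}^{(-1)^{s}}(m)[s])^{\vee}$ and $(A_{s}^{(-1)^{s+1}}(m)[s+1])^{\vee}$ appearing in the statement, while the remaining summand is the $\Z$-dual of the $\Theta$-piece.

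The one point that needs unpacking is the rewriting of that dualized $\Theta$-piece. By the convention introduced just before Proposition \ref{p3}, for $n>0$ the symbol $\Theta^{\pm}_{-n,m}$ is \emph{defined} to equal $\mathrm{Hom}(\Theta^{\pm}_{n,m},\Z)$; hence $(\Theta^{\pm}_{n,m})^{\vee}=\Theta^{\pm}_{-n,m}$ for every integer $n$. Dualizing the $\Theta$-summand $\Theta^{(-1)^{(k+1)(\ell+1)}}_{n,-\ell-m}[\ell]$ of Theorem \ref{t1} thus yields $\Theta^{(-1)^{(k+1)(\ell+1)}}_{-n,-\ell-m}[\ell]$, matching the first case of Theorem \ref{t3}; the case $n<0$ is handled identically starting from Theorem \ref{t2} and produces the second case. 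The sign $(-1)^{(k+1)(\ell+1)}$, which only records the parity of the orientation twist of the ambient representation sphere, is unaffected by $\Z$-duality, and the outer shift $[\ell]$ propagates through the dualization because dualization is applied componentwise to the $\Z[C']$-summands making up the complex.

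With the decomposition in hand, the individual cohomology groups follow from universal coefficients applied to Propositions \ref{p1}, \ref{p2}, \ref{p3}: all homology groups appearing there are finitely generated, so $\mathrm{Hom}(-,\Z)$ recovers the free parts and $\mathrm{Ext}^{1}(-,\Z)$ preserves each cyclic torsion summand, and the cohomology of any $(-)^{\vee}$ summand can be read off after reversing degree. The main obstacle is therefore not computational but notational: one must verify carefully that the Spanier--Whitehead duality flip $n\rho\mapsto -n\rho$ is faithfully reflected by the notational flip $n\mapsto -n$ inside $\Theta^{\pm}$, so that the cochain-complex decomposition of Theorem \ref{t3} reads exactly as stated, with the $\Theta$-index of Theorem \ref{t1}/\ref{t2} replaced by its negative and all other data preserved.
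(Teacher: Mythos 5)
Your proposal correctly identifies the intended argument: Theorem \ref{t3} is obtained by $\Z$-linear dualization of the decompositions in Theorems \ref{t1} and \ref{t2}, using that $\mathrm{Hom}_\Z(-,\Z)$ is additive and that the paper's convention $\Theta^{\pm}_{n,m}:=\mathrm{Hom}(\Theta^{\pm}_{-n,m},\Z)$ for $n<0$ makes $(\Theta^{\pm}_{n,m})^{\vee}=\Theta^{\pm}_{-n,m}$. This is exactly the paper's (essentially unargued) route, which the text signals by the remark preceding the theorem that the cohomology of the dual $A$- and $\Theta$-complexes follows from Propositions \ref{p1}--\ref{p3} via universal coefficients; one could tighten the justification that the shift $[\ell]$ persists (it is a matter of cochain-versus-chain degree convention rather than dualization acting ``componentwise''), but the conclusion is correct.
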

\qed

\vspace{10mm}

\end{document}